\documentclass{amsart}
\usepackage{amsfonts}
\usepackage{bbm}
\usepackage{mathrsfs}
\usepackage{color}
\usepackage{amsmath}
\usepackage{bigdelim}
\usepackage{multirow}
\usepackage{amssymb}
\usepackage{indentfirst}
\usepackage{CJK}
\usepackage{fancyhdr}
\usepackage{amscd,amssymb,amsmath,graphicx,verbatim,xy}
\usepackage[TS1,OT1,T1]{fontenc}

\newtheorem{theorem}{Theorem}[section]
\newtheorem{lemma}[theorem]{Lemma}

\theoremstyle{definition}
\newtheorem{definition}[theorem]{Definition}

\theoremstyle{remark}
\newtheorem{remark}[theorem]{Remark}
\numberwithin{equation}{section}

\begin{document}

\title[The right invariant metric on the analytic automorphism group]{The right invariant metric on the analytic automorphism group of the unit open disk induced by maximal modulus }

\author{Yue Xin}
\address{Yue Xin, School of Mathematical Science, Heilongjiang University, 150080, Harbin, P. R. China}
\email{2024104@hlju.edu.cn}

\author{Yan Li}
\address{Yan Li, Sixth High School of CAIDA, 130011, Changchun, P. R. China}
\email{41905193@qq.com}

\author{Bingzhe Hou}
\address{Bingzhe Hou, School of Mathematics, Jilin University, 130012, Changchun, P. R. China}
\email{houbz@jlu.edu.cn}
	
\date{}
\subjclass[2010]{53C60, 22F50, 54E35.}
\keywords{Analytic automorphism group, right invariant metric, almost regular Finsler structure, maximal modulus, Randers metric.}
\thanks{}
\begin{abstract}
In this paper, we study the right invariant metric $d_{H^{\infty}}$ on the analytic automorphism group $\rm{Aut}(\mathbb{D})$ of the unit open disk $\mathbb{D}$ induced by maximal modulus, that is,
$d_{H^{\infty}}(\varphi, \psi)=\sup_{z\in\mathbb{D}}|\varphi(z)-\psi(z)|$ for any $\varphi, \psi\in \rm{Aut}(\mathbb{D})$.
We give the explicit formula of the right invariant metric $d_{H^{\infty}}$ and characterize the almost regular Finsler geometric structure of $(\rm{Aut}(\mathbb{D}), d_{H^{\infty}})$.
\end{abstract}
\maketitle

\section{Introduction}

The research into invariant Riemannian metrics on Lie groups began with the classical work of Milnor \cite{Mil}, which investigated the geometric characterization of various types of Lie groups. From then on, the invariant metric on a group has been an important tool for studying group actions, representation theory, and geometric structures. Notice that there are many meaningful invariant Finsler (non-Riemannian) metrics on groups. As pointed out by S.-S. Chern \cite{Chern}, Finsler geometry is just Riemannian geometry without the quadratic restriction. The study of Finsler spaces has many applications in Physics and Biology (see \cite{Ant} for example). In particular, G. Randers \cite{Ra} introduced a very interesting type of Finsler structures from general relativity in 1941, named Randers metric now.  Then, the Randers metric was extensively studied in both theory and application (for example, see \cite{CS}, \cite{Cheng}, \cite{Li} and the references therein). So far,  many invariant Finsler (non-Riemannian) metrics on groups have been introduced and studied. For instance, L. Huang \cite{Huang} studied Ricci curvatures of left invariant Finsler metrics on Lie groups, S. Vukmirovi\'{c} \cite{Vu15} and M. Nasehi \cite{Nas} studied the invariant metrics of the Heisenberg groups and their Finsler geometry. The Finsler geometry of invariant metrics are also widely studied on homogeneous manifolds (\cite{DH1}, \cite{DH2}, \cite{DH3} and \cite{Tan}) and Teichm\"{u}ller space (see \cite{Pap}, \cite{SZ} and the references therein). In addition, a class of weakened Finsler metrics in the setting of almost regularity was also studied, such as \cite{Zhou}, \cite{CL} and \cite{TN}, where the almost regularity means the Finsler metric is $C^{\infty}$ except for some directions.

We are interested in the analytic automorphism group $\rm{Aut}(\mathbb{D})$ of the unit open disk $\mathbb{D}$. As well known, $\rm{Aut}(\mathbb{D})$ is isomorphic to the analytic automorphism group of the hyperbolic plane, which is isomorphic to the projective special linear group $\rm{PSL}(2, \mathbb{R})$. The group $\rm{Aut}(\mathbb{D})$ describes the symmetry of hyperbolic geometry on the Poincar\'{e} disk, and then it plays an important role in symmetry analysis of quantum mechanics (conformal field theory), transformation theory in computer graphics, symmetry structure analysis in circuit design and so on. In addition, the analytic automorphism group $\rm{Aut}(\mathbb{D})$ is naturally closely related to the study of analytic functions on $\mathbb{D}$ in complex analysis and functional analysis (see \cite{Gar} for example).

Notice that each $\varphi\in\rm{Aut}(\mathbb{D})$ could be written as the following form
$$
\varphi(z)=\mathrm{e}^{\mathbf{i}\xi}\frac{z-u}{1-\overline{u}z}, \ \ \ \text{for some} \ \xi\in\mathbb{R} \ \text{and} \ u\in\mathbb{D}.
$$
The correspondence $\varphi\mapsto (\xi, u)$ naturally induces a local coordinate system of the manifold $\rm{Aut}(\mathbb{D})$.
For convenience, we denote the analytic automorphism $\mathrm{e}^{\mathbf{i}\xi}\frac{z-u}{1-\overline{u}z}$ by $f_{\xi, u}(z)$,
and for any $u\in\mathbb{D}$, denote
$$
\varphi_{u}(z)=f_{0,u}(z)=\frac{z-u}{1-\bar{u}z}.
$$
In addition, recall that the quasi-hyperbolic metric $\rho$ on $\mathbb{D}$ is defined by
\[
\rho(u, v)=\left|\frac{u-v}{1-\bar{u}v}\right|, \ \ \ \text{for any} \  \ u, v\in\mathbb{D}.
\]

In this paper, we aim to introduce a metric $d_{H^{\infty}}$ on the analytic automorphism group $\rm{Aut}(\mathbb{D})$ from maximal modulus ($H^{\infty}$-norm), that is,
$$
d_{H^{\infty}}(\varphi, \psi)=\sup\limits_{z\in\mathbb{D}}|\varphi(z)-\psi(z)|, \ \ \  \ \text{for any} \ \varphi, \psi\in \rm{Aut}(\mathbb{D}).
$$
Let $G$ be a group and $d$ be a metric on $G$. If for any $g\in G$,
\[
d(g_1g, g_2g)=d(g_1, g_2), \ \ \ \ \text{for any} \ g_1,g_2\in G,
\]
then we call the metric $d$  a right invariant metric on $G$. From the perspective of function theory, it is not difficult to see that $d_{H^\infty}$ is a right invariant metric on $\rm{Aut}(\mathbb{D})$. We will give the explicit formula of the right invariant metric $d_{H^{\infty}}$ and show the Finsler geometric structure of $(\rm{Aut}(\mathbb{D}), d_{H^{\infty}})$. Our main result is as follows.

\vspace{2mm}

\noindent \textbf{Main Theorem.} \ \  Given any $\xi, \eta\in\mathbb{R}$ and any $u,v\in\mathbb{D}$. Let
\[
\lambda=\mathrm{e}^{\mathbf{i}(\eta-\xi)}\cdot\frac{1-\bar{u}v}{1-u\bar{v}}.
\]
Then, we have the explicit formula of the distance $d_{H^\infty}(f_{\xi, u},f_{\eta, v})$ on $\rm{Aut}(\mathbb{D})$ as follows,
\begin{enumerate}
  \item if $|\lambda+1|\leq 2\rho(u,v)$,
  \begin{equation*}
    d_{H^\infty}(f_{\xi, u},f_{\eta, v})=2;
  \end{equation*}
  \item if $|\lambda+1|>2\rho(u,v)$,
  \begin{equation*}\label{e*}
      d_{H^\infty}(f_{\xi, u},f_{\eta, v})=\left|(1-2\rho^2(u,v)-2\mathbf{i}\rho(u,v)\sqrt{1-\rho^2(u,v)})-(\textrm{Re}\lambda+\mathbf{i}|\textrm{Im}\lambda|)\right|.
  \end{equation*}
  \end{enumerate}
Furthermore, the induced geometric structure $F(f_{\xi,u},(s,h))$ is a non-Riemannian reversible almost regular Finsler structure, whose explicit formula is as follows,
\[
F(f_{\xi, u}, (s,h))=\frac{2|h|+|2\mathrm{Im}(\bar{u}h)-s(1-|u|^2)|}{1-|u|^2}.
\]
Moreover, the metric space $(\rm{Aut}(\mathbb{D}), d_{H^\infty})$ is not a geodetic space.
\vspace{2mm}

This article is organized as follows. In Section 2, we review some fundamental concepts and results in  Finsler geometry. In Section 3, we show the right invariance of the metric $d_{H^\infty}$ on the analytic automorphism group $\textrm{Aut}(\mathbb D)$ and give the proof of the Main Theorem.

\section{Preliminaries}

In this section, let us review some fundamental concepts of Finsler geometry (one could refer to \cite{chern}, \cite{DSZ} and \cite{shenshen} for more details).

\begin{definition}[Minkowski norm]
	Let $V$ be an $n$-dimensional real vector space. A function $F=F(y)$ on $V$ is called a Minkowski norm on $V$ if it satisfies the following conditions.
	\begin{enumerate}
		\item \ For any $y\in V$, $F(y\ge0)$ and $F(y)=0$ if and only if $y=0$.
		\item \ For any $y\in V$ and $t>0$, $F(t y)=t F(y)$.
		\item \ $F$ is $C^{\infty}$ on $V\backslash\{0\}$. Moreover, for any $y\in V\backslash\{0\}$, the bilinear symmetric form $g_y(\mathbf{u}, \mathbf{v}):=\frac{1}{2}\frac{\partial^2}{\partial s\partial t}[F^2(y+s\mathbf{u}+t\mathbf{v})]_{s=t=0}$ on $V$ is an inner product.
	\end{enumerate}
Furthermore, $(V,F)$ is called the Minkowski space and $g_y$ is called the fundamental form with respect to $y$.
\end{definition}

Let $M$ be an $n$-dimensional manifold. For any $x\in M$, denote by $T_x M$ the tangent space of $M$ at $x$, $TM=\cup_{x\in M}T_xM$ the tangent bundle of $M$. The elements in $TM$ can be write as $(x,y)$, where $y\in T_x M$. Denote by $\widetilde{TM}=TM\setminus0$ the slit tangent bundle of $M$. A Finsler structure of $M$ may be viewed as a smoothly varying family of Minkowski norms.

\begin{definition}[Finsler structure and almost regular Finsler structure]
	A function $F = F(x, y)$ on $TM$ is called  a Finsler structure or Finsler metric on $M$ if it satisfies the following conditions.
	\begin{enumerate}	
        \item (Regularity)\ $F(x,y)$ is $C^{\infty}$ on $\widetilde{TM}$.
		\item (Positive homogeneity)\ $F(x,t y)=t F(x,y)$, $t\in\mathbb{R}^+$, $(x,y)\in \widetilde{TM}$.
		\item (Strong convexity)\ The fundamental tensor $g_{ij}(x, y) = \frac{1}{2} \frac{\partial^2 F^2}{\partial y^i \partial y^j}$ is positive definite on $\widetilde{TM}$.
	\end{enumerate}
 A smooth manifold $M$ equipped with a Finsler structure $F$ is called a Finsler manifold, denoted by $(M,F)$. Moreover, if $F(x, y)=F(x,-y)$ for any $(x,y)\in \widetilde{TM}$, $F$ is called a reversible Finsler structure. More generally, if we replace $\widetilde{TM}$ by $TM\setminus E_0$, where $E_0$ is a proper sub-bundle of $TM$, we say that $F$ is an almost regular Finsler structure and $(M,F)$ is an almost regular Finsler manifold.
\end{definition}

\begin{definition}[Riemannian metric]
A Riemannian metric on an $n$-dimensional smooth manifold $M$ is a smooth symmetric positive definite covariant $2$-tensor field $g$ on $M$, satisfying the following properties.
	\begin{enumerate}
		\item (Bilinearity)\  For any point $p \in M$, and any tangent vectors $\mathbf{v}, \mathbf{w}, \mathbf{u} \in T_p M$ and scalars $a, b \in \mathbb{R}$, $g_p(a\mathbf{v} + b\mathbf{w}, \mathbf{u}) = a  g_p(\mathbf{v}, \mathbf{u}) + b  g_p(\mathbf{w}, \mathbf{u})$.
		\item (Symmetry)\ For any $p \in M$ and $\mathbf{v}, \mathbf{w} \in T_pM$, $g_p(\mathbf{v}, \mathbf{w}) = g_p(\mathbf{w}, \mathbf{v})$.
		\item (Positive definiteness)\ For any $p \in M$ and any non-zero tangent vector $\mathbf{v}\in T_pM$, $g_p(\mathbf{v}, \mathbf{v}) > 0$.
	\end{enumerate}
Furthermore, a Riemannian manifold is a smooth manifold equipped with a Riemannian metric.
\end{definition}

Let $(U; x^1, x^2, \dots, x^n)$ be a local coordinate system around a point $p \in M$, and let $\left\{ \frac{\partial}{\partial x^1}, \dots, \frac{\partial}{\partial x^n} \right\}$ be a basis for the tangent space $T_pM$. Then, the components of the Riemannian metric $g$ with respect to this basis are defined as $g_{ij}(p) = g_p\left( \frac{\partial}{\partial x^i}, \frac{\partial}{\partial x^j} \right)$. For any two tangent vectors $\mathbf{v} = v^i \frac{\partial}{\partial x^i}$ and $\mathbf{w} = w^j \frac{\partial}{\partial x^j}$, their inner product can be expressed as $g_p(\mathbf{v}, \mathbf{w}) = g_{ij}(p) v^i w^j$. Intuitively, $g$ assigns an inner product $g_p$ to each tangent space $T_pM$, and this inner product varies smoothly across $M$. Finsler structure $F$ on a Riemannian manifold satisfies $F(x,y)=\sqrt{g_{ij}(x)y^i y^j}$, then $\frac{1}{2}(F^2)_{y^i y^j}=g_{ij} (x)$, which is independent of tangent vectors. Then, a Riemannian manifold is a Finsler manifold with quadratic form constraints.

\begin{definition}
	Let $(M,F)$ be a Finsler manifold. Put
\[
			A_{ijk}:=\frac{F}{2}\frac{\partial g_{ij}}{\partial y^k}=\frac{1}{4}(F^2)_{y^iy^jy^k}(x,y).
 \]
The Cartan tensor $\mathcal{A}$ is
$
 \mathcal{A}=A_{ijk}d_{x^i}\otimes d_{x^j}\otimes d_{x^k}.
$
\end{definition}

\begin{definition}[Randers metric]
A Randers metric is a Finsler structure $F$ on $TM$ that has the form $F(x,y)=\alpha(x,y)+\beta(x,y)$,
in which $\alpha(x,y)=\sqrt{{a}_{ij}(x)y^iy^j}$ is a Riemannian metric and $\beta(x,y)={b}_{i}y^i$ is a $1$-form satisfying
\[
\|\beta\|_{\alpha}:=\sqrt{{a}^{ij}(x)b_ib_j}<1,
\]
where $(a^{ij})=(a_{ij})^{-1}$.
\end{definition}

In the above definition, the condition $\|\beta\|_{\alpha}<1$ guarantees the positive definiteness and strong convexity of $F$. Deicke's Theorem \cite{Dei} tells us that a Finsler structure $F$ is Riemannian if and only if $\mathcal{A}\equiv0$. Furthermore, a Randers metric $F$ is Riemannian if and only if $b_{i}=0$ for all $i$. It is easy to see that the previous statements also hold for almost regular Finsler structures and almost regular Randers metrics. It is worthy noticing that a reversible Randers metric must be a Riemannian metric, but a reversible almost regular Randers metric is possible to be a non-Riemannian metric.

\begin{definition}
	Let $(X, d)$ be a metric space, and let $L \in \mathbb{R}^+$. An $L$-geodesic in $X$ is an isometric embedding $\gamma: [0, L] \rightarrow X$, where the interval $[0, L]$ is equipped with the standard Euclidean metric $|t_2-t_1|$ on $\mathbb{R}$, satisfied that, for all $t_1, t_2 \in [0, L] $ with $t_1 \leq t_2$,  $d\left(\gamma(t_1), \gamma(t_2)\right) = |t_2 - t_1|$. A metric space $(X, d)$ is called a geodesic space if for any $x, x' \in X$, there exists a geodesic with starting point $x$ and ending point $x'$.
\end{definition}

\section{The geometric structure of $(\rm{Aut}(\mathbb{D}),d_{H^\infty})$}

In this section, we will show the right invariance of the metric $d_{H^\infty}$ on the analytic automorphism group $\textrm{Aut}(\mathbb D)$ and give the proof of the Main Theorem.

\begin{lemma}\label{ri}
$d_{H^\infty}$ is a right invariant metric on the analytic isomorphism group $\textrm{Aut}(\mathbb{D})$.
\end{lemma}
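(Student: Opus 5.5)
The proof has two parts: that $d_{H^\infty}$ is a metric on $\textrm{Aut}(\mathbb{D})$, and that it is right invariant. Both use only that every $\varphi\in\textrm{Aut}(\mathbb{D})$ is a bijection of $\mathbb{D}$ onto itself.

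For the metric axioms, first observe that $d_{H^\infty}(\varphi,\psi)$ is finite, since $|\varphi(z)-\psi(z)|\le|\varphi(z)|+|\psi(z)|<2$. Hence $0\le d_{H^\infty}(\varphi,\psi)\le 2$. Symmetry is immediate from $|\varphi(z)-\psi(z)|=|\psi(z)-\varphi(z)|$. For the triangle inequality, for each $z\in\mathbb{D}$ we have
\[
|\varphi(z)-\chi(z)|\le|\varphi(z)-\psi(z)|+|\psi(z)-\chi(z)|\le d_{H^\infty}(\varphi,\psi)+d_{H^\infty}(\psi,\chi),
\]
and we take the supremum over $z$. For definiteness, if $d_{H^\infty}(\varphi,\psi)=0$ then $\varphi(z)=\psi(z)$ for every $z\in\mathbb{D}$, so $\varphi=\psi$ as maps, that is, as elements of the group. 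So $d_{H^\infty}$ is simply the restriction of the sup-norm distance on bounded functions to $\textrm{Aut}(\mathbb{D})$.

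For right invariance, fix $g\in\textrm{Aut}(\mathbb{D})$, with the group operation being composition, so $g_1g=g_1\circ g$. Then
\[
d_{H^\infty}(g_1\circ g,g_2\circ g)=\sup_{z\in\mathbb{D}}|g_1(g(z))-g_2(g(z))|=\sup_{w\in g(\mathbb{D})}|g_1(w)-g_2(w)|.
\]
Since $g$ maps $\mathbb{D}$ onto $\mathbb{D}$, the substitution $w=g(z)$ runs over all of $\mathbb{D}$, so the last supremum equals $d_{H^\infty}(g_1,g_2)$.

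No step here is a real obstacle. The only point needing care is that surjectivity of $g$ is what makes the change of variables in the supremum exact. This is also why the argument gives right invariance rather than left invariance: left composition $g\circ g_1$ changes the values of the functions, and is not an isometry of $\mathbb{D}$ for the Euclidean distance.
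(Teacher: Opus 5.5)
Your proof is correct and follows the paper's argument: the paper also obtains right invariance from the substitution $\omega=h(z)$ in the supremum, using only that $h$ maps $\mathbb{D}$ bijectively onto itself. The one difference is that you check the metric axioms explicitly (finiteness via the bound $2$, symmetry, the triangle inequality, definiteness), whereas the paper treats these as obvious because $d_{H^\infty}$ is the restriction of the $H^\infty$-norm distance.
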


\begin{proof}
It is obvious that $d_{H^\infty}$ is the metric induced by $H^\infty$-norm on the analytic isomorphism group $\textrm{Aut}(\mathbb{D})$.
Given any $h\in\textrm{Aut}(\mathbb{D})$. Notice that $h$ is a bijection from $\mathbb{D}$ to itself. Then, for any $\varphi,\psi\in\textrm{Aut}(\mathbb{D})$,
\begin{equation*}
	d_{H^\infty}(\varphi\circ h, \psi\circ h)=\sup\limits_{z\in\mathbb{D}}\left|\varphi\circ h(z)-\psi\circ h(z)\right|=\sup\limits_{\omega\in\mathbb{D}}\left|\varphi(\omega)-\psi(\omega)\right|=d_{H^\infty}(\varphi, \psi).
\end{equation*}
This finishes the proof.
\end{proof}

\begin{proof}[\textbf{Proof of Main Theorem}]
\textbf{(1)} Firstly, let us give the explicit formula of the metric $d_{H^\infty}$ on $\rm{Aut}(\mathbb{D})$.
Given any $\xi, \eta\in\mathbb{R}$ and any $u,v\in\mathbb{D}$. Recall that
\[
\lambda=\mathrm{e}^{\mathbf{i}(\eta-\xi)}\cdot\frac{1-\bar{u}v}{1-u\bar{v}}.
\]
One can see that
\begin{align*}
	d_{H^\infty}(f_{\xi, u},f_{\eta, v})&=\sup\limits_{z\in\mathbb{D}}\left|\mathrm{e}^{{\bf i}\xi}\cdot\frac{z-u}{1-\bar{u}z}-\mathrm{e}^{{\bf i}\eta}\cdot\frac{z-v}{1-\bar{v}z}\right|\\
	&=\sup\limits_{z\in\mathbb{D}}\left|\frac{z-u}{1-\bar{u}z}-\mathrm{e}^{{\bf i}(\eta-\xi)}\cdot\frac{z-v}{1-\bar{v}z}\right|\\
	&=\sup\limits_{z\in\mathbb{D}}\left|z-\mathrm{e}^{{\bf i}(\eta-\xi)}\cdot\frac{\varphi_{-u}(z)-v}{1-\bar{v}\varphi_{-u}(z)}\right|\\
	&=\sup\limits_{z\in\mathbb{D}}\left|z-\mathrm{e}^{{\bf i}(\eta-\xi)}\cdot\frac{1-\bar{u}v}{1-u\bar{v}}\frac{z-\varphi_{u}(v)}{1-\overline{\varphi_{u}(v)}z}\right| \\
    &=\sup\limits_{\theta\in\mathbb{R}}\left|\mathrm{e}^{{\bf i}\theta}-\mathrm{e}^{{\bf i}(\eta-\xi)}\cdot\frac{1-\bar{u}v}{1-u\bar{v}}\cdot\frac{\mathrm{e}^{{\bf i}\theta}-\varphi_{u}(v)}{1-\overline{\varphi_{u}(v)}\mathrm{e}^{{\bf i}\theta}}\right| \\
    &=\sup\limits_{\theta\in\mathbb{R}}\left|1-\mathrm{e}^{{\bf i}(\eta-\xi)}\cdot\frac{1-\bar{u}v}{1-u\bar{v}}\cdot\frac{1-\varphi_{u}(v)\mathrm{e}^{-{\bf i}\theta}}{1-\overline{\varphi_{u}(v)}\mathrm{e}^{{\bf i}\theta}}\right| \\
    &=\sup\limits_{\theta\in\mathbb{R}}\left|1-\mathrm{e}^{{\bf i}(\eta-\xi)}\cdot\frac{1-\bar{u}v}{1-u\bar{v}}\cdot\frac{1-\rho(u,v)\mathrm{e}^{{\bf i}\theta}}{1-\rho(u,v)\mathrm{e}^{-{\bf i}\theta}}\right| \\
    &=\sup\limits_{\theta\in\mathbb{R}}\left|1-\lambda\cdot\frac{1-\rho(u,v)\mathrm{e}^{{\bf i}\theta}}{1-\rho(u,v)\mathrm{e}^{-{\bf i}\theta}}\right|.
 \end{align*}

As shown in Figure \ref{pic1}, we take two tangent lines from $1$ to the circle $\{z;~|z|=\rho(u,v)\}$, intersecting the unit circle at $\lambda_1$ and $\lambda_2$, respectively. Denote by $\delta$ the angle between these two tangent lines.

\begin{figure}[h]
	\centering
	\includegraphics[width=0.65\textwidth]{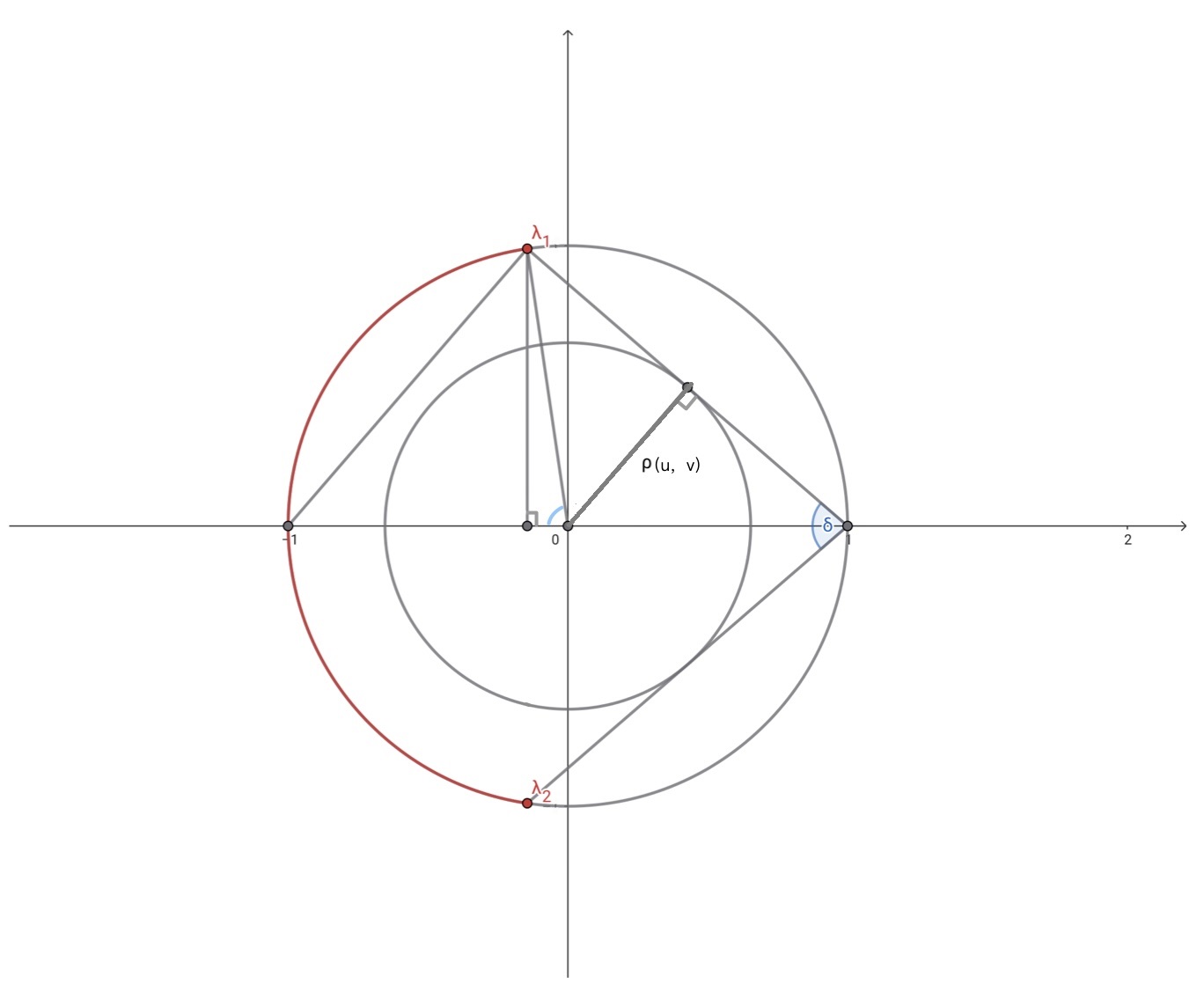}\\
	\caption{}\label{pic1}
\end{figure}

Then, one can see that
\[
\left\{\arg\left(\frac{1-\rho(u,v)\mathrm{e}^{{\bf i}\theta}}{1-\rho(u,v)\mathrm{e}^{-{\bf i}\theta}}\right);~\theta\in\mathbb{R}\right\}=[-\delta, \delta].
\]
Therefore, as shown in Figure \ref{pic1}, $d_{H^\infty}(f_{\xi, u},f_{\eta, v})=2$ if and only if $\lambda$ lies on the arc from $\lambda_1$ to $\lambda_2$ and passing through $-1$ (the red arc), i.e.,
\[
|\lambda+1|\leq 2\rho(u,v).
\]
Moreover, we have
\[
\textrm{Im}(\mathrm{e}^{{\bf i}\delta})=\rho(u,v)\cdot 2\sqrt{1-\rho^2(u,v)},
\]
and then
\[
\mathrm{e}^{{\bf i}\delta}=1-2\rho^2(u,v)+2\mathbf{i}\rho(u,v)\sqrt{1-\rho^2(u,v)}.
\]

Suppose that $|\lambda+1|>2\rho(u,v)$. Then,
\begin{align*}
	d_{H^\infty}(f_{\xi, u},f_{\eta, v})&=\sup\limits_{\theta\in\mathbb{R}}\left|1-\lambda\cdot\frac{1-\rho(u,v)\mathrm{e}^{{\bf i}\theta}}{1-\rho(u,v)\mathrm{e}^{-{\bf i}\theta}}\right| \\
    &=\left|1-(\textrm{Re}\lambda+\mathbf{i}|\textrm{Im}\lambda|)\mathrm{e}^{{\bf i}\delta} \right| \\
    &=\left|\mathrm{e}^{-{\bf i}\delta}-(\textrm{Re}\lambda+\mathbf{i}|\textrm{Im}\lambda|) \right| \\
    &=\left|(1-2\rho^2(u,v)-2\mathbf{i}\rho(u,v)\sqrt{1-\rho^2(u,v)})-(\textrm{Re}\lambda+\mathbf{i}|\textrm{Im}\lambda|) \right|.
 \end{align*}

\textbf{(2)} Let us give the explicit formula of the geometric structure on $\textrm{Aut}(\mathbb D)$ induced by the metric $d_{H^\infty}$.
Given $s\in\mathbb{R}$ and $h\in\mathbb{C}$ with $(s, h)$ being nontrivial. Let
\[
\lambda(\Delta t)=\mathrm{e}^{\mathbf{i}s\Delta t}\cdot\frac{1-\bar{u}(u+h\Delta t)}{1-u\overline{(u+h\Delta t)}}.
\]
Notice that
\begin{align*}
	&\lim\limits_{\Delta t\rightarrow 0^+}\frac{1-\lambda(\Delta t)}{\Delta t} \\
=&\lim\limits_{\Delta t\rightarrow 0^+}\frac{1-\mathrm{e}^{\mathbf{i}s\Delta t}\cdot\frac{1-\bar{u}(u+h\Delta t)}{1-u\overline{(u+h\Delta t)}}}{\Delta t} \\
=&\lim\limits_{\Delta t\rightarrow 0^+}\frac{1-\mathrm{e}^{\mathbf{i}s\Delta t}}{\Delta t}+\frac{\mathrm{e}^{\mathbf{i}s\Delta t}\left(1-\frac{1-\bar{u}(u+h\Delta t)}{1-u\overline{(u+h\Delta t)}}\right)}{\Delta t}\\
=&\lim\limits_{\Delta t\rightarrow 0^+}\frac{1-\mathrm{e}^{\mathbf{i}s\Delta t}}{\Delta t}+\lim\limits_{\Delta t\rightarrow 0^+}\mathrm{e}^{\mathbf{i}s\Delta t}\cdot\frac{\bar{u}h-u\bar{h}}{1-u\overline{(u+h\Delta t)}}\\
=&-\mathbf{i}s+\frac{2\mathbf{i}\mathrm{Im}(\bar{u}h)}{1-|u|^2}\\
=&\mathbf{i}\cdot\frac{2\mathrm{Im}(\bar{u}h)-s(1-|u|^2)}{1-|u|^2}.
 \end{align*}

If $2\mathrm{Im}(\bar{u}h)-s(1-|u|^2)\neq 0$, then for a sufficient small positive number $\Delta t$,
\[
\textrm{sgn}~(2\mathrm{Im}(\bar{u}h)-s(1-|u|^2))=\textrm{sgn}~\mathrm{Im}(1-\lambda(\Delta t))=-\textrm{sgn}~\mathrm{Im}\lambda(\Delta t).
\]
Then,
\begin{align*}
	&F(f_{\xi, u}, (s,h)) \\
=&\lim\limits_{\Delta t\rightarrow 0^+}\frac{d_{H^\infty}(f_{\xi, u},f_{\xi+s\Delta t, u+h\Delta t})}{\Delta t} \\
=&\lim\limits_{\Delta t\rightarrow 0^+}\frac{1}{\Delta t}\cdot\left|
\begin{aligned}
&(1-2\rho^2(u,u+h\Delta t)-2\mathbf{i}\rho(u,u+h\Delta t)\sqrt{1-\rho^2(u,u+h\Delta t)}) \\
&-(\textrm{Re}\lambda(\Delta t)+\mathbf{i}|\textrm{Im}\lambda(\Delta t)|)
\end{aligned} \right| \\
=&\lim\limits_{\Delta t\rightarrow 0^+}\frac{1}{\Delta t}\cdot\left|
\begin{aligned}
&1-2\rho^2(u,u+h\Delta t)-(\textrm{Re}\lambda(\Delta t)+\mathbf{i}\textrm{Im}\lambda(\Delta t)) \\
&-\textrm{sgn}~(\textrm{Im}\lambda(\Delta t))2\mathbf{i}\rho(u,u+h\Delta t)\sqrt{1-\rho^2(u,u+h\Delta t)}
\end{aligned} \right| \\
=&\lim\limits_{\Delta t\rightarrow 0^+}\frac{1}{\Delta t}\cdot\left|
\begin{aligned}
&(1-\lambda(\Delta t))-2\rho^2(u,u+h\Delta t) \\
&-\textrm{sgn}~(\textrm{Im}\lambda(\Delta t))2\mathbf{i}\rho(u,u+h\Delta t)\sqrt{1-\rho^2(u,u+h\Delta t)})
\end{aligned}\right| \\
=&\frac{2|h|+|2\mathrm{Im}(\bar{u}h)-s(1-|u|^2)|}{1-|u|^2}.
 \end{align*}

\textbf{(3)} Let us show that $F(f_{\xi, u}, (s,h))$ is a non-Riemannian reversible almost regular Finsler structure.
Let
 \begin{align*}
& E_0=\{(f_{\xi, u}, (s,h));~ 2\mathrm{Im}(\bar{u}h)-s(1-|u|^2)=0\}, \\
& E^+=\{(f_{\xi, u}, (s,h));~ 2\mathrm{Im}(\bar{u}h)-s(1-|u|^2)>0\}, \\
& E^-=\{(f_{\xi, u}, (s,h));~ 2\mathrm{Im}(\bar{u}h)-s(1-|u|^2)<0\}.
 \end{align*}
Then, $TM\setminus E_0=E^+\bigcup E^-$, $E_0$ is a proper sub-bundle of $TM$ and $F|_{TM\setminus E_0}$ is $C^\infty$.
Following from the explicit formula of $F(f_{\xi, u}, (s,h))$, it is obvious that $F(f_{\xi, u}, (s,h))$ satisfies the positive homogeneity and reversibility, i.e.,
\[
F(f_{\xi, u}, t(s,h))=tF(f_{\xi, u}, (s, h)), \ \ \  \text{for any} \ t\in\mathbb{R}.
\]

We use $G^{+}$ and $G^{-}$ to denote the fundamental tensors of $F$ on $E^+$ and $E^-$, respectively.
Let $u=x+{\bf i}y\in\mathbb{D}$ and $h=a+{\bf i}b\in\mathbb{C}$. Then,
\[
2\mathrm{Im}(\bar{u}h)-s(1-|u|^2)=2(xb-ya)-s(1-x^2-y^2).
\]

If $2\mathrm{Im}(\bar{u}h)-s(1-|u|^2)<0$, we have
\begin{align*}
	F(f_{\xi, u}, (s,h))=& \frac{2|h|-2\mathrm{Im}(\bar{u}h)+s(1-|u|^2)}{1-|u|^2}   \\
=&\frac{2\sqrt{a^2+b^2}}{1-x^2-y^2}+\frac{s(1-x^2-y^2)-2(xb-ya)}{1-x^2-y^2}.
 \end{align*}

For convenience, denote
\[
C=\frac{2}{1-x^2-y^2} \ \ \ \text{and}  \ \ \ r=\sqrt{a^2+b^2}.
\]
After calculation, it can be obtained that
\[
G^{-}=\begin{pmatrix}
		1 & C(\frac{a}{r}+y)& C(\frac{b}{r}-x)\\
		C(\frac{a}{r}+y) & C^2(\frac{a}{r}+y)^2+\frac{CF}{r^3}b^2 & C^2(\frac{a}{r}+y)(\frac{b}{r}-x)-\frac{CF}{r^3}ab   \\
		C(\frac{b}{r}-x) &C^2(\frac{a}{r}+y)(\frac{b}{r}-x)-\frac{CF}{r^3}ab  &C^2(\frac{b}{r}-x)^2+\frac{CF}{r^3}a^2
	\end{pmatrix}.
\]
Let
\[
Q=\begin{pmatrix}
		1 & C(\frac{a}{r}+y)& C(\frac{b}{r}-x)\\
		0 & 1 & 0   \\
		0 & 0  &1
	\end{pmatrix}.
\]
Then,
\[
G^{-}=Q^T\cdot\begin{pmatrix}
		1 & 0& 0\\
		0 & \frac{CF}{r^3}b^2 & -\frac{CF}{r^3}ab   \\
		0 & -\frac{CF}{r^3}ab  &\frac{CF}{r^3}a^2
	\end{pmatrix}\cdot Q.
\]
Notice that
\[
M=\begin{pmatrix}
		\frac{CF}{r^3}b^2 & -\frac{CF}{r^3}ab   \\
		-\frac{CF}{r^3}ab  &\frac{CF}{r^3}a^2
	\end{pmatrix}=
\frac{CF}{r^3}\cdot\begin{pmatrix}
b^2 & -ab   \\
		-ab  & a^2
	\end{pmatrix}
\]
is positive semi-definite and
\[
\left\{\mathbf{u};~\mathbf{u}^TM\mathbf{u}=0 \right\}=\left\{t(0,a,b)^T;~t\in\mathbb{R}   \right\}.
\]
Then, $G^{-}$ is positive semi-definite and
\begin{align*}
\left\{\mathbf{v};~\mathbf{v}^TG\mathbf{v}=0 \right\}=& \left\{Q^{-1}(0,ta,tb)^T;~t\in\mathbb{R} \right\} \\
=&\left\{t\left(\frac{2(xb-ya-r)}{1-r^2},a,b \right)^T;~t\in\mathbb{R} \right\}.
\end{align*}
Since
\[
2(xb-ya)-(1-r^2)\cdot\frac{2(xb-ya-r)}{1-r^2}=2r\ge0,
\]
$G^{-}$ is positive definite on $E^{-}$. Similarly, one can see that the fundamental tensor $G^{+}$ is positive definite on $E^{+}$. So, $F|_{TM\setminus E_0}$ satisfies the condition of strong convexity.

In addition, it is not difficult to see that the fundamental tensor is not independent of the tangent vectors (i.e., the Cartan tensor is non-trivial).
Therefore, $F(f_{\xi, u}, (s,h))$ is a non-Riemannian reversible almost regular Finsler structure.

\textbf{(4)} Finally, let us show that the metric space $(\textrm{Aut}(\mathbb{D}),d_{H^\infty})$ is not a geodetic space.
Given any $f_{\xi, u}\in\textrm{Aut}(\mathbb{D})$. Let
\[
\Omega=\left\{f_{\eta, v}; \left|\mathrm{e}^{\mathbf{i}(\eta-\xi)}\cdot\frac{1-\bar{u}v}{1-u\bar{v}}+1\right|<2\rho(u,v) \right\}.
\]
Notice that $\Omega$ is a non-empty open subset of $\textrm{Aut}(\mathbb{D})$. Choose an element $f_{\eta, v}\in\Omega$. Then, $d_{H^\infty}(f_{\xi, u},f_{\eta, v})=2$.
Given any curve $\Gamma$ connecting $f_{\xi, u}$ and $f_{\eta, v}$ in $\textrm{Aut}(\mathbb D)$. Since $\Omega$ is an open neighborhood of $f_{\eta, v}$, there is an element $f_{\vartheta, w}$ in $\Gamma\bigcap\Omega$. Then, we also have $d_{H^\infty}(f_{\xi, u}, f_{\vartheta, w})=2$ and $d_{H^\infty}(f_{\eta, v},f_{\vartheta, w})>0$. We use $L_{\Gamma}(f_{\xi, u}, f_{\eta, v})$ and $L_{\Gamma}(f_{\vartheta, w}, f_{\eta, v})$  to denote the length of the curve segment from $f_{\xi, u}$ to $f_{\vartheta, w}$ along $\Gamma$ and the length of the curve segment from $f_{\vartheta, w}$ to $f_{\eta, v}$ along $\Gamma$, respectively. Then,
\begin{align*}
	L_{\Gamma}=& L_{\Gamma}(f_{\xi, u}, f_{\eta, v})+L_{\Gamma}(f_{\vartheta, w}, f_{\eta, v}) \\
\geq & d_{H^\infty}(f_{\xi, u}, f_{\vartheta, w})+d_{H^\infty}(f_{\eta, v},f_{\vartheta, w})\\
	= & 2+d_{H^\infty}(f_{\eta, v},f_{\vartheta, w}) \\
>&2=d_{H^\infty}(f_{\xi, u},f_{\eta, v}),
\end{align*}
which implies that the metric space $(\textrm{Aut}(\mathbb{D}),d_{H^\infty})$ is not a geodetic space.
\end{proof}

\begin{remark}
Let us consider the sub-manifold $\mathfrak{D}$ of $(\textrm{Aut}(\mathbb{D}),d_{H^\infty})$ defined by
\[
\mathfrak{D}=\left\{\varphi_{u}(z)=\frac{z-u}{1-\bar{u}z};~ u\in\mathbb{D} \right\},
\]
which could be seemed as the space $\mathbb{D}$ equipped with the metric induced by $d_{H^\infty}$.
Following from the Main Theorem, one can see that, for any $u,v\in\mathbb{D}$,
\begin{enumerate}
  \item if $2-\bar{u}v-\bar{v}u\leq 2|u-v|$,
  \begin{equation*}
    d_{H^\infty}(\varphi_{u},\varphi_{v})=2;
  \end{equation*}
  \item if $2-\bar{u}v-\bar{v}u>2|u-v|$,
  \begin{align*}
      &d_{H^\infty}(\varphi_{u},\varphi_{v}) \\
      =&\left|(1-2\rho^2(u,v)-2\mathbf{i}\rho(u,v)\sqrt{1-\rho^2(u,v)})
      -\left(\textrm{Re}\left(\frac{1-\bar{u}v}{1-u\bar{v}}\right)+\mathbf{i}\left|\textrm{Im}\left(\frac{1-\bar{u}v}{1-u\bar{v}}\right)\right|\right) \right|.
  \end{align*}
  \end{enumerate}
Furthermore, the induced geometric structure $F(\varphi_{u}, h)$ is a non-Riemannian reversible almost regular Finsler structure, whose explicit formula is as follows,
\[
F(\varphi_{u}, h)=\frac{2|h|+2|\mathrm{Im}(\bar{u}h)|}{1-|u|^2}.
\]
$F(\varphi_{u}, h)$ is $C^{\infty}$ except for the directions parallel to $u$. In fact, $F(\varphi_{u}, h)$ is an almost regular Randers metric. Write $u=x+{\bf i}y\in\mathbb{D}$ and $h=a+{\bf i}b\in\mathbb{C}$. Suppose that $\mathrm{Im}(\bar{u}h)>0$. Let
\[
\alpha(\varphi_{u}, h)=\frac{2\sqrt{a^2+b^2}}{1-x^2-y^2} \ \ \ \text{and} \ \ \ \beta(\varphi_{u}, h)=\frac{2(xb-ya)}{1-x^2-y^2}.
\]
Then $F(\varphi_{u}, h)$ is the sum of a Riemannian metric $\alpha(\varphi_{u}, h)$ and a non-trivial $1$-form $\beta(\varphi_{u}, h)$ on $\mathrm{Im}(\bar{u}h)>0$, where
\[
\|\beta\|_{\alpha}=\frac{1-x^2-y^2}{2}\cdot\sqrt{\frac{4(x^2+y^2)}{(1-x^2-y^2)^2}}=\sqrt{x^2+y^2}<1.
\]
Similarly, $F(\varphi_{u}, h)$ is also a Randers metric on $\mathrm{Im}(\bar{u}h)>0$.

\begin{figure}[h]
	\centering
	\includegraphics[width=0.6\textwidth]{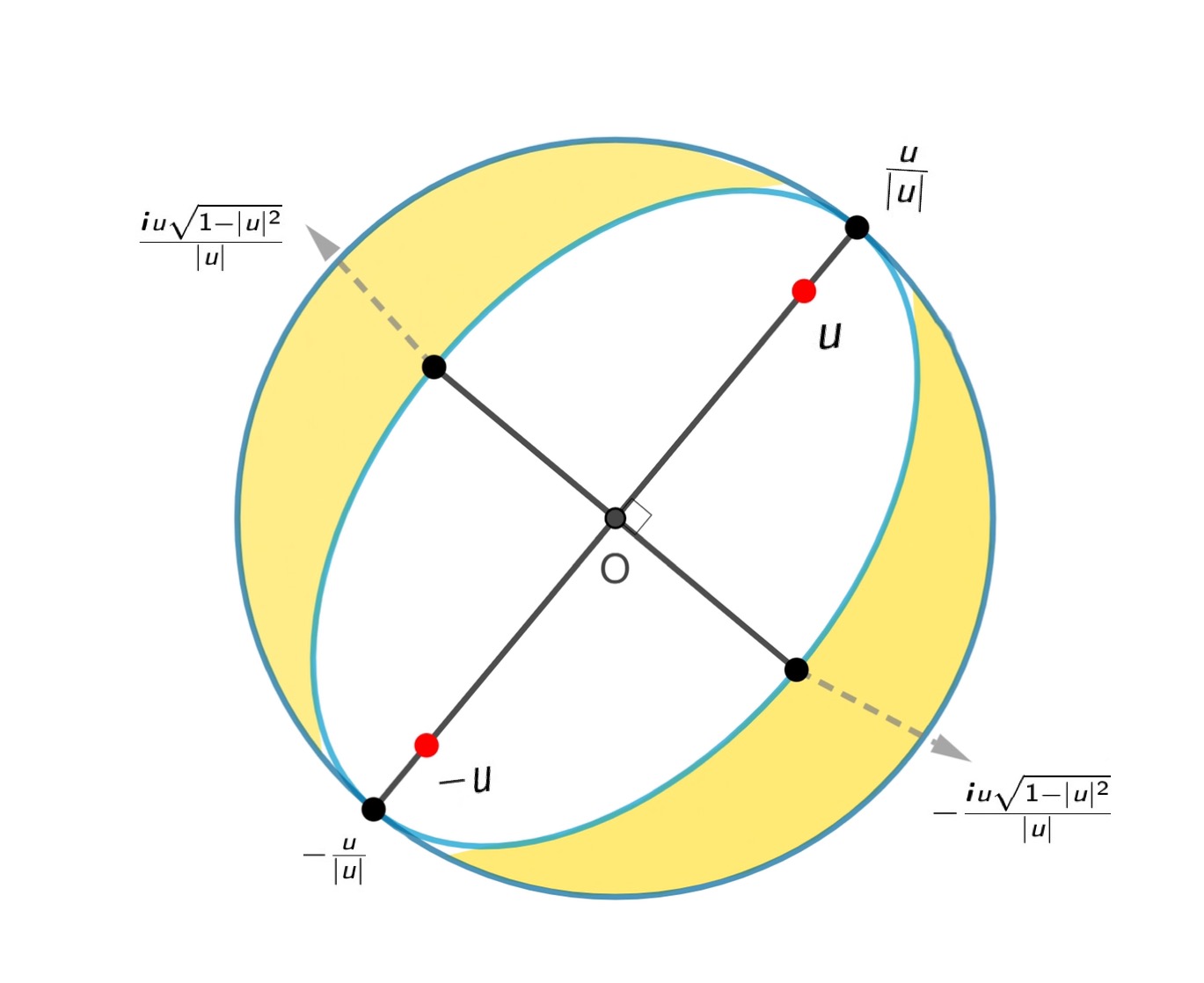}\\
	\caption{}\label{pic2}
\end{figure}

Given any $u\in\mathbb{D}$. As show in Figure \ref{pic2}, the set $\{v\in\mathbb{C};~2-\bar{u}v-u\bar{v}= 2|u-v|\}$ is the ellipse $\Lambda$ with focal points $u$ and $-u$, a major semi-axis length of $1$ and a minor semi-axis length of $\sqrt{1-|u|^2}$. Notice that $d_{H^\infty}(\varphi_{u},\varphi_{v})=2$ if and only if $v$ is not inside of the ellipse $\Lambda$. Then,
similar to the proof for $(\textrm{Aut}(\mathbb{D}),d_{H^\infty})$, one can see that the metric space $(\mathfrak{D}, d_{H^\infty})$ is also not a geodetic space.
\end{remark}

\section*{Declarations}

\noindent \textbf{Ethics approval}

\noindent Not applicable.

\noindent \textbf{Conflict of interest/Competing interests}

\noindent The authors declare that there is no conflict of interest or competing interest.

\noindent \textbf{Funding}

\noindent The third author was partially supported by National Natural Science Foundation of China (Grant No. 12471120).

\noindent \textbf{Authors' contributions}

\noindent All authors contributed equally to this work.

\noindent \textbf{Availability of data and materials}

\noindent Data sharing is not applicable to this article as no data sets were generated or analyzed during the current study.

\end{document}